\newtheorem{Proof}{Proof.}
 \newenvironment{proof}{\begin{Proof}\rm}{\hfill $\Box$ \end{Proof}}
\newtheorem{lem}{Lemma.}
\newcommand{\GF}{{\mathrm{GF}}}
\newcounter{abbildung}
\begin{document}
\title{ On the axiomatics of projective and affine
geometry\\ in terms of line intersection}
\author{Hans Havlicek\thanks{Corresponding author.} \and Victor Pambuccian}
\date{}

\maketitle \thispagestyle{empty}%%LAYOUT

\begin{abstract} \noindent By providing explicit definitions, we
show that in both affine and projective geometry of dimension
$\geq 3$, considered as first-order theories axiomatized in terms
of lines as the only variables, and the binary line-intersection
predicate as primitive notion, non-intersection of two lines can
be positively defined in terms of line-intersection.
\par
\noindent\emph{Mathematics Subject Classification (2000)}: 51A05,
51A15, 03C40, 03B30.
\par
\noindent\emph{Key Words}:  line-intersection, projective
geometry, affine geometry, positive definability,
 \linebreak[4]%%LAYOUT
Lyndon's preservation theorem.
\end{abstract}

M. Pieri \cite{pie} first noted that projective three-dimensional
space can be axiomatized in terms of lines and line-intersections.
A simplified axiom system was presented in \cite{hi}, and two new
ones in \cite{tro} and \cite{koz}, by authors apparently unaware
of \cite{pie} and \cite{hi}. Another axiom system was presented in
\cite[Ch.\ 7]{sto}, a book devoted to the subject of
three-dimensional projective line geometry.
\par
One of the consequences of \cite{cho} is that axiomatizability in
terms of line-intersections holds not only for $n$-dimension
projective geometry with $n=3$, but for all $n\geq 3$. Two such
axiomatizations were carried out in \cite{pamb}. It follows from
\cite{hav2} that there is \emph{more} than just plain
axiomatizability in terms of line-intersections that can be said
about projective geometry, and it is the purpose of this note to
explore the statements that can be made inside these theories, or
in other words to find the definitional equivalent for the
theorems of Brauner \cite{brau}, Havlicek \cite{hav2}, and
Havlicek \cite{hav1}, which state that \emph{bijective} mappings
between the line sets of projective or affine spaces of the same
dimension $\geq 3$ which map intersecting lines into intersecting
lines stem from collineations, or, for three-dimensional
projective spaces, from correlations. (See also \cite[Ch.\
5]{benz}, \cite{hua}, and \cite{kreuz}).
 \par
We shall also prove that, in the projective case, for $n\geq 4$,
`bijective' can be replaced by `surjective' in the above theorem,
and the same holds in the affine case for $n\geq 3$.
\par
Let ${\cal L}$ denote the one-sorted first-order language, with
individual variables to be interpreted as {\em lines}, containing
as only non-logical symbol the binary relation symbol $\sim$, with
$a\sim b$ to be interpreted as `$a$ intersects $b$' (and thus are
\emph{different} lines).
\par
Given Lyndon's preservation theorem (\cite{lyn}, see also
\cite[Cor.\ 10.3.5, p.\ 500]{hod})---
\par
\textbf{Theorem.} {\em Let ${\cal L}$ be a first order language
containing a sign for an identically false formula, ${\cal T}$ be
a theory in ${\cal L}$, and $\varphi({\bf X})$ be an ${\cal
L}$-formula in the free variables ${\bf X} = (X_1, \ldots, X_n)$.
Then the following assertions are equivalent:
 \vspace{-2.5mm} %%LAYOUT
\begin{itemize}\itemsep0mm %%LAYOUT
 \item[(i)]
there is a positive  ${\cal L}$-formula $\psi({\bf X})$ such that
${\cal T}\vdash \varphi({\bf X}) \leftrightarrow \psi({\bf X})$;
 \item[(ii)]
for any $\mathfrak{A}, \mathfrak{B}$ $\in Mod({\cal T})$, and each
epimorphism $f:\mathfrak{A}\rightarrow \mathfrak{B}$, the
following condition is satisfied: if ${\bf c}\in \mathfrak{A}^n$
and $\mathfrak{A}\models \varphi({\bf c})$, then
$\mathfrak{B}\models \varphi(f({\bf c}))$.
\end{itemize}
}%end \emphasize
\par\noindent
---there must exist a positive ${\cal L}$-definition for
the non-intersection of two lines (note that our `sign for an
identically false formula' is $a\sim a$).

\section{Projective Spaces}
\subsection{Dimension $\geq 4$}

We start with projective geometry of dimension $n \geq 4$. We
shall henceforth write $a\simeq b$ for $a\sim b\vee a=b$, as well
as $(a_1, \ldots, a_p \sim b_1, \ldots, b_q)$ for
$\bigwedge_{1\leq i\leq p, 1\leq j\leq q} a_i\sim b_j$.
\par
We first define the ternary co-punctuality predicate $S$, with
$S(abc)$ standing for `$a, b, c$ are three different lines passing
through the same point' by (addition in the indices, whenever the
stated bound for the index variable is exceeded, is mod 3
throughout the paper)
\begin{equation} \label{1}
S(a_1a_2a_3)\; :\Leftrightarrow\; (\forall\, g)(\exists\, h)\,
g\sim h\wedge\Big( \bigwedge_{i=1}^3(a_i\sim a_{i+1},h)\Big).
\end{equation}
It is easy to see that (\ref{1}) holds when the lines $a_i$ are
different and concurrent. Should the three lines $a_i$ intersect
pairwise in three different points, then they would be coplanar
and, by $n\geq 4$, for a line $g$ which is skew to that plane, we
could not find an appropriate line $h$. Next we define the closely
related ternary predicate $\overline{S}$, where
$\overline{S}(abc)$ stands for `$c$ passes through the
intersection point of $a$ and $b$' by
\begin{equation} \label{2}
\overline{S}(abc)\;:\Leftrightarrow\; S(abc)\vee \Big(a\sim b
\wedge (c=a \vee c=b)\Big),
\end{equation}
and then the quaternary predicate $\#$, with $ab\,\#\, cd$ to be
read as `the intersection point of $a$ and $b$ is different from
that of $c$ and $d$' by
\begin{equation} \label{3}
a_1b_1\,\#\, a_2b_2 \;:\Leftrightarrow\;  (\forall\, g)(\exists\,
h_1h_2) \bigwedge_{i=1}^2 (a_i\sim b_i) \wedge \Bigg(\!\!
\Big(\bigwedge_{i=1}^2 \overline{S}(a_ib_ih_i) \wedge
S(h_1h_2g)\!\Big)\vee
\Big(\bigvee_{i=1}^2\overline{S}(a_ib_ig)\!\Big)\!\!\Bigg).
\end{equation}
%%LAYOUT negative Abstaende, sonst ist die Zeile zu lang
\par
In fact, suppose that $P_1:=a_1\cap b_1$ and $P_2:=a_2\cap b_2$
are points and that $g$ is a line. If $P_1$ or $P_2$ is on $g$,
then the existence of $h_1$ and $h_2$ is trivial. If $P_1$ and
$P_2$ are not on $g$ (figure \ref{abb1}), then for $P_1\neq P_2$
there exists a point $H\in g$ which is not on $\langle
P_1,P_2\rangle$, i.~e.\ the line joining $P_1$ and $P_2$; hence
the lines $h_i:=\langle P_i,H\rangle$ ($i=1,2$) have the required
properties.  On the other hand, if $P_1=P_2\notin g$, then
(\ref{3}) cannot be satisfied, since $S(h_1,h_2,g)$ would imply
$h_1\neq h_2$, but $\overline S(a_i,b_i,h_i)$ would force
$h_1=h_2$.
\par
Notice that we can now define positively the negation of line
equality by
\begin{equation}\label{4}
a\neq b\;:\Leftrightarrow\; (\exists\, g)\, ag\,\#\, bg,
\end{equation}
which proves that a surjective map between the sets of lines of
two projective spaces of dimension $n\geq 4$, which maps
intersecting lines into intersecting lines, must be injective as
well.
\par
%%%%%%%%%%%%%%%%%%%%%%%%%%%%%%%%%%%%%%%%%%%%%%%%%%%%%%%%%%%%%%%%%%
{\unitlength1.0cm
      \begin{center}
      %% Figur
      \begin{minipage}[t]{6.0cm}
         \begin{picture}(6.0,3.5)
         \put(0.05,0.60){$a_1$}
         \put(1.05,0.60){$b_1$}
         \put(2.00,1.60){$h_1$}
         \put(0.40,2.10){$P_1$}
         \put(4.60,0.60){$a_2$}
         \put(5.85,0.60){$b_2$}
         \put(3.80,1.60){$h_2$}
         \put(5.55,1.45){$P_2$}
         \put(3.15,0.70){$H$}
         \put(3.40,3.20){$g$}
         \put(0.0 ,0.0){\includegraphics[width=6.0cm]{./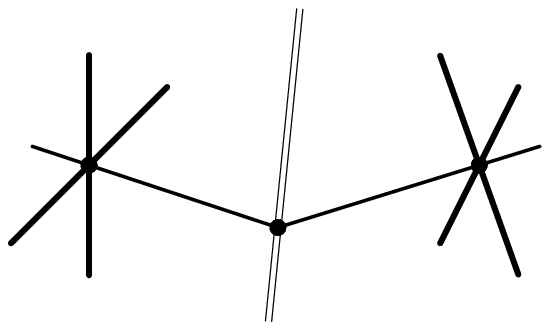}}
      \end{picture}
      {\refstepcounter{abbildung}\label{abb1}
       \centerline{Figure \ref{abb1}.}}
      \end{minipage}
      \hspace{1.5cm}
      %%Figur
      \begin{minipage}[t]{6.0cm}
         \begin{picture}(6.0,3.5)
         \put(0.0 ,0.0){\includegraphics[width=6.0cm]{./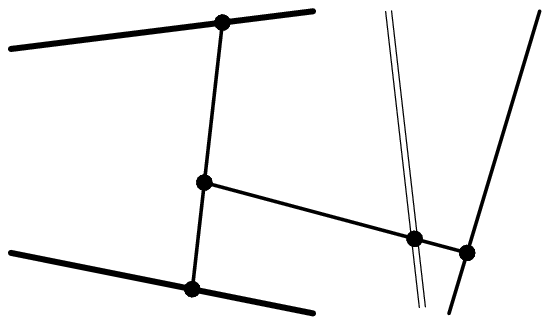}}
         \put(0.20,0.30){$a_1$}
         \put(0.20,2.65){$b_1$}
         \put(2.45,2.40){$b_2$}
         \put(3.45,1.35){$b_3$}
         \put(5.50,1.35){$a_2$}
         \put(4.50,2.80){$g$}
         \end{picture}
         {\refstepcounter{abbildung}\label{abb2}
          \centerline{Figure \ref{abb2}.}}
         \end{minipage}
      \end{center}
   %\end{figure}
}%
%%%%%%%%%%%%%%%%%%%%%%%%%%%%%%%%%%%%%%%%%%%%%%%%%%%%%%%%%%%%%%%%%%
\par
We are now ready to define the non-intersection predicate
$\not\sim$ for $n$-dimensional projective spaces with $n\geq 4$.
Let $m=\left[\frac{n-1}{2}\right]$. For $n$ even we have
\begin{eqnarray} \label{5}
a_1\not\sim b_1 &  :\Leftrightarrow  & (a_1=b_1)\vee (\exists\,
a_2\ldots
a_m)(\forall\, g)(\exists\, b_2\ldots b_{m+1})\, \\
& & \bigwedge_{i=2}^{m+1} b_{i}a_{i-1}\,\#\, b_ib_{i-1}\wedge
g\sim b_{m+1}, \nonumber
\end{eqnarray}
and for $n$ odd we have
\begin{eqnarray} \label{6}
a_1\not\sim b_1 &  :\Leftrightarrow  & (a_1=b_1)\vee (\exists\,
a_2\ldots
a_m)(\forall\, g)(\exists\, b_2\ldots b_{m+1}c_2\ldots c_{m+1})\, \\
& & \bigwedge_{i=2}^{m+1} (b_{i}a_{i-1}\,\#\, b_ib_{i-1}\wedge
c_{i}a_{i-1}\,\#\, c_{i}c_{i-1})\wedge b_{m+1}g\,\#\, c_{m+1}g.
 \nonumber
\end{eqnarray}
These two definitions state that, if $a_1$ does not intersect
$b_1$, and if $a_1\neq b_1$, then the set $\{a_1, b_1\}$ can be
extended to a linearly independent set $A:=\{b_1, a_1,\ldots
a_m\}$ (note than if $n=4$, then $m=1$, so there are no $a$'s
bound by the existential quantifier in (\ref{5}) at all) spanning
a subspace $U$ of dimension $2m+1$, i.~e.\ the whole projective
space if $n$ is odd, or a hyperplane if $n$ is even (see
\cite[II.5]{bur}). In both cases, any line $g$ can be reached from
$A$ in the manner described in (\ref{5}) and (\ref{6}), as $g$
lies in $U$ if $n$ is odd, and thus has two different points
common with it, so (\ref{6}) holds, and $g$ intersects $U$ in at
least one point if $n$ is even, so (\ref{5}) holds. See figure
\ref{abb2} for the case $n=6$.

\par
If $a_1$ intersects $a_2$, then the dimension of the subspace
$U$ spanned by any $A$ containing $a_1$ and $a_2$ will be, for $n$
even, at most $n-2$, so there are lines $g$ which do not intersect
$U$, and thus cannot be reached in the manner described in
(\ref{5}), and if $n$ is odd, the dimension of $U$ is at most
$n-1$, so there are lines $g$ which intersects $U$ in one point,
so for those lines definition (\ref{6}), which requires that the
line $g$ intersects $U$ in two different points, cannot hold.
\par
Given (\ref{1}), (\ref{2}), (\ref{3}), it is obvious that
$n$-dimensional projective geometry with $n\geq 3$, can be
axiomatized inside ${\cal L}$, as one can rephrase the axiom
system based on point line incidence of the Veblen-Young type (for
example the one in Lenz \cite[p.\ 19--20]{len} to which lower- and
upper-dimension axioms have been added) in terms of line
intersections only, by replacing each `point $P$' with two
intersecting lines $p_1$ and $p_2$, the equality of two points $P$
and $Q$, which have been replaced by $(p_1, p_2)$  and $(q_1,
q_2)$, by $\overline{S}(p_1p_2q_1)\wedge \overline{S}(p_1p_2q_2)$
and every occurrence of `$P$ is incident with $l$' by
$\overline{S}(p_1p_2l)$. This has been carried out in \cite{pamb}.

\par
Since in some models (e.~g.\ over commutative fields) of
three-dimensional projective geometry there are correlations, $S$
cannot be definable  in terms of $\sim$, so the approach used for
dimensions $\geq 4$ fails in this case. However, $\not\sim$ is
positively definable, with negated equality allowed, in terms of
$\sim$, and it is to this definition that we now turn our
attention.

\subsection{The three-dimensional case}

In the three-dimensional case, we first define the ternary
relation $T$, with $T(abc)$ holding if and only if `either the
three different lines $a,b,c$  intersect pairwise in three
different points (and then we call $abc$ a \emph{tripod}) or they
are concurrent, but do not lie in the same plane (in which case we
call $abc$ a \emph{trilateral})', by
\begin{eqnarray} \label{delta}
T(a_1a_2a_3) & :\Leftrightarrow & (\forall\, g_1g_2)(\exists\,
x_1x_2x_3)\,
(g_1,g_2 \sim x_1,x_2,x_3) \\
& &{}\wedge\Bigg( \bigwedge_{i=1}^3 \Big((x_i \simeq a_i,
a_{i+1})\wedge a_i\sim a_{i+1}\Big)\Bigg) \wedge
\Big(\bigvee_{i=1}^3x_i\neq x_{i+1}\Big). \nonumber
\end{eqnarray}
To see that the above definition holds when $a_1a_2a_3$ is a
trilateral, let $A_i$ be the point of intersection of the lines
$a_i$ and $a_{i+1}$ for $i=1,2,3$ (figure \ref{abb3}). Through
each $A_i$ there is a line $x_i$ intersecting (and different from)
both $g_1$ and $g_2$. The $x_i$ satisfy the conditions of
(\ref{delta}) since they cannot all coincide, given that no single
line can, by the definition of a trilateral, pass through $A_1,
A_2, A_3$. A dual reasoning to that presented for the case in
which $a_1a_2a_3$ is a trilateral shows that the definition
(\ref{delta}) holds for tripods $a_1a_2a_3$ as well.
\par
To see that the only other case that could occur, given that
$a_i\sim a_j$ for all $i\neq j$, namely that in which the three
lines $a_1, a_2, a_3$ are lying in the same plane $\pi$ and have a
point $O$ in common, does not satisfy (\ref{delta}), we choose
$g_1$, $g_2$ such that they are skew, not in $\pi$, and intersect
the line $a_1$ in two points that are different from $O$  (figure
\ref{abb4}). The only line that meets $g_1, g_2$ and two of the
lines $a_1, a_2, a_3$ is $a_1$ itself.
\par
%%%%%%%%%%%%%%%%%%%%%%%%%%%%%%%%%%%%%%%%%%%%%%%%%%%%%%%%%%%%%%%%%%
{\unitlength1.0cm
      \begin{center}
      %% Figur
      \begin{minipage}[t]{6.0cm}
         \begin{picture}(6.0,3.9)
         \put(0.5,3.7){$g_1$}
         \put(0.5,0.1){$g_2$}
         \put(0.85,3.05){$x_1$}
         \put(2.85,3.05){$x_2$}
         \put(4.80,3.05){$x_3$}
         \put(1.30,1.15){$A_1$}
         \put(2.85,1.95){$A_2$}
         \put(4.35,1.40){$A_3$}
         \put(2.60,1.35){$a_1$}
         \put(1.90,2.15){$a_2$}
         \put(4.00,2.15){$a_3$}
         \put(0.0 ,0.0){\includegraphics[width=6.0cm]{./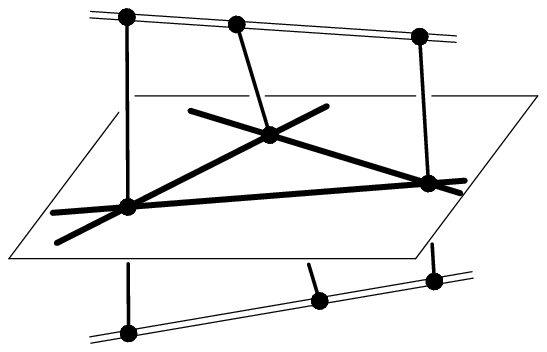}}
      \end{picture}
      {\refstepcounter{abbildung}\label{abb3}
       \centerline{Figure \ref{abb3}.}}
      \end{minipage}
      \hspace{1.5cm}
      %%Figur
      \begin{minipage}[t]{6.0cm}
         \begin{picture}(6.0,3.9)
         \put(0.0 ,0.0){\includegraphics[width=6.0cm]{./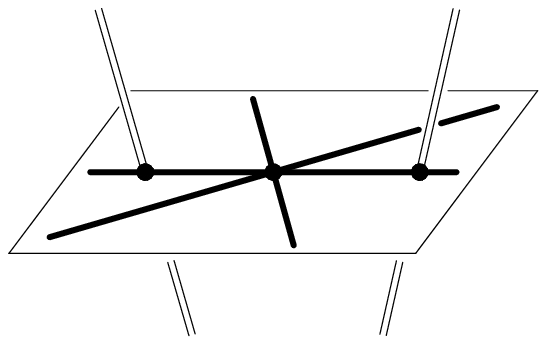}}
         \put(1.25,3.5){$g_1$}
         \put(4.55,3.5){$g_2$}
         \put(4.40,1.05){$\pi$}
         \put(2.65,1.40){$O$}
         \put(2.00,2.05){$a_1$}
         \put(1.35,1.15){$a_2$}
         \put(3.30,1.15){$a_3$}
         \end{picture}
         {\refstepcounter{abbildung}\label{abb4}
          \centerline{Figure \ref{abb4}.}}
         \end{minipage}
      \end{center}
   %\end{figure}
}%
%%%%%%%%%%%%%%%%%%%%%%%%%%%%%%%%%%%%%%%%%%%%%%%%%%%%%%%%%%%%%%%%%%
\par
Next, we define a sexternary predicate $\equiv_{+}$, with $abc\,
\equiv_{+}  a'b'c'$ to be read as `$abc$ and $a'b'c'$ are either
both trilaterals or both tripods', by
\begin{eqnarray}\label{T}
 a_1b_1c_1 \equiv_{+}  a_2b_2c_2
 & :\Leftrightarrow &
 (\forall\, g)(\exists\, x_{11}x_{21}x_{12}x_{22}x_{13}x_{23})
 \nonumber\\
    & &
   \bigwedge_{i=1}^2
   \Bigg(T(a_ib_ic_i)
   \wedge \Big(\bigwedge_{j=1}^3 (x_{ij}\simeq a_i, b_i, c_i,g)
   \wedge (x_{ij}\neq x_{i,j+1})\Big)\Bigg)\\
   & & {}\wedge \Big(\bigwedge_{j=1}^3 x_{1j}\simeq x_{2j}\Big) .\nonumber
\end{eqnarray}
\par
Suppose that $a_1b_1c_1$ and $a_2b_2c_2$ are trilaterals in planes
$\pi_1$ and $\pi_2$, respectively. Then the lines $x_{ij}$ can be
chosen as follows: If (i) $\pi_1\neq\pi_2$ and if $g$ is skew to
the line $s=\pi_1\cap\pi_2$, then we choose three distinct points
$X_1, X_2, X_3$ on $s$, and we let $x_{ij}$ be the line joining
$X_j$ with $g\cap \pi_i$ (figure \ref{abb5}). If (ii) $\pi_1\neq
\pi_2$ and if $g$ and $s$ are not skew, then we choose $G$ to be a
point lying on both $g$ and $s$, and we let $x_{11}=x_{21}=s$, and
choose for $x_{i2}$ and $x_{i3}$ any two distinct lines through
$G$ in the plane $\pi_i$, which are different from $s$. If (iii)
$\pi_1=\pi_2=\pi$, then we let $x_{11}=x_{21}$, $x_{12}=x_{22}$,
and $x_{13}=x_{23}$ be any three distinct lines in $\pi$ through a
point common to $\pi$ and $g$. In case both $a_1b_1c_1$ and
$a_2b_2c_2$ are tripods, the reasoning is, by dint of duality,
similar.
\par
Should $a_1b_1c_1$ be a trilateral in a plane $\pi$, and
$a_2b_2c_2$ be a tripod with the vertex (point of concurrence)
$P$, then we let $g$ be a line which neither passes through $P$
nor lies in $\pi$ (figure \ref{abb6}). Let $G$ be the point of
intersection of $g$ with $\pi$, and let $\gamma$ be the plane
spanned  by $g$ and $P$. If lines $x_{ij}$ were to satisfy the
conditions in the second line of (\ref{T}), then $G\in
x_{1j}\subset \pi$ and $P\in x_{2j}\subset \gamma$, and since at
least two of the lines $x_{1j}$, say $x_{11}$ and $x_{12}$, must
be different from $\pi\cap \gamma$, the conditions $x_{11}\simeq
x_{21}$ and $x_{12}\simeq x_{22}$ imply that both $x_{21}$ and
$x_{22}$ have to be the line joining $P$ with $G$, so they cannot
be different, as required by the definiens in (\ref{T}).
\par
%%%%%%%%%%%%%%%%%%%%%%%%%%%%%%%%%%%%%%%%%%%%%%%%%%%%%%%%%%%%%%%%%%
{\unitlength1.0cm
      \begin{center}
      %% Figur
      \begin{minipage}[t]{6.0cm}
         \begin{picture}(6.0,3.5)
         \put(0.0 ,0.0){\includegraphics[width=6.0cm]{./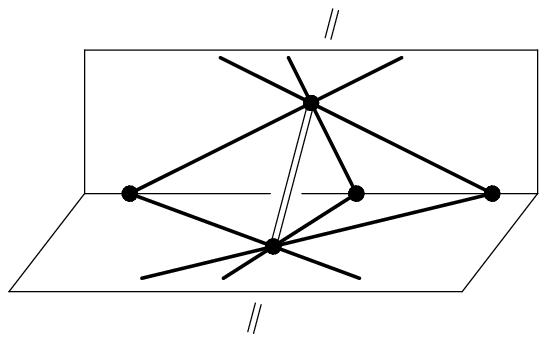}}
         \put(4.80,0.65){$\pi_1$}
         \put(5.55,2.9){$\pi_2$}
         \put(3.35,3.45){$g$}
         \put(1.05,1.8){$X_1$}
         \put(4.00,1.8){$X_2$}
         \put(5.40,1.8){$X_3$}
         \put(3.90,0.78){$x_{11}$}
         \put(2.76,0.60){$x_{12}$}
         \put(1.10,0.78){$x_{13}$}
         \put(4.50,2.85){$x_{21}$}
         \put(3.30,2.95){$x_{22}$}
         \put(2.00,2.85){$x_{23}$}

      \end{picture}
      {\refstepcounter{abbildung}\label{abb5}
       \centerline{Figure \ref{abb5}.}}
      \end{minipage}
      \hspace{1.5cm}
      %%Figur
      \begin{minipage}[t]{6.0cm}
         \begin{picture}(6.0,3.5)
         \put(0.0 ,0.1){\includegraphics[width=6.0cm]{./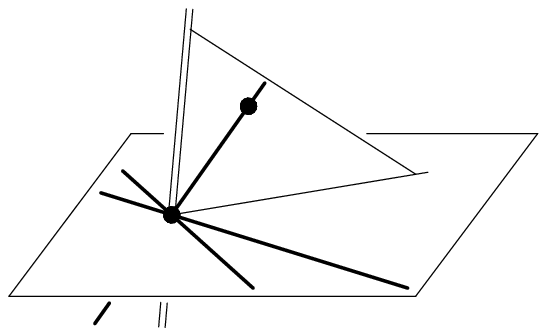}}
         \put(5.50,2.00){$\pi$}
         \put(1.50,0.95){$G$}
         \put(2.60,2.05){$P$}
         \put(1.70,3.5){$g$}
         \put(3.9 ,1.9){$\gamma$}
         \put(3.95,0.78){$x_{11}$}
         \put(2.8,0.60){$x_{12}$}
         \end{picture}
         {\refstepcounter{abbildung}\label{abb6}
          \centerline{Figure \ref{abb6}.}}
         \end{minipage}
      \end{center}
   %\end{figure}
}%
%%%%%%%%%%%%%%%%%%%%%%%%%%%%%%%%%%%%%%%%%%%%%%%%%%%%%%%%%%%%%%%%%%

\par
We now define the sexternary predicate $\equiv_{-}$, with $abc
\equiv_{-} a'b'c'$ standing for `$abc$ and $a'b'c'$ are (in any
order) a trilateral and a tripod', by
\begin{eqnarray} \label{N}
a_1b_1c_1 \equiv_{-}  a_2b_2c_2 & :\Leftrightarrow & (\forall\,
g)(\exists\, x_1x_2)\, \bigwedge_{i=1}^2 \Big((x_i\simeq a_i, b_i,
c_i)\wedge
T(a_ib_ic_i)\Big)\\
& & {} \wedge \Big(\bigvee_{i=1}^2 (g=x_i\vee a_ib_ic_i\equiv_{+}
gx_1x_2 )\Big).\nonumber
\end{eqnarray}

\par
Suppose $a_1b_1c_1$ is a trilateral, lying in the plane $\pi$, and
$a_2b_2c_2$ is a tripod, with vertex $P$. If $g$ is a line in
$\pi$ then we choose $x_1=g$ and as $x_2$ any line through $P$.
The case that $g$ passes through $P$ can be treated similarly.
Hence we may restrict our attention to the case in which $g$
neither lies in $\pi$ nor passes through $P$, and denote in this
case by $G$ the point of intersection of $g$ and $\pi$, and by
$\gamma$ the plane spanned by $P$ and $g$.
\par
Then (i) if $P\not\in \pi$, we let $x_2$ be the line joining $P$
and $G$, and $x_1$ be any line in $\pi$ passing through $G$ and
different from the line $\pi\cap\gamma$ (figure \ref{abb7}), and
(ii) if $P\in \pi$, then we let $x_2$ be the line joining $P$ with
$G$, and we let $x_1$ be any line in $\pi$ passing through $G$,
but different from $x_2$ (figure \ref{abb8}).
\par
Now if both $a_1b_1c_1$ and $a_2b_2c_2$ were trilaterals lying in
the same plane, then  for any line $g$ not lying in that plane, we
could not find $x_1$ and $x_2$ with the desired properties, as the
requirement that $\bigwedge_{i=1}^2 (x_i\simeq a_i, b_i, c_i)$
forces them to lie in $\pi$, and so they can neither be equal to
$g$ nor form a trilateral with it. If both $a_1b_1c_1$ and
$a_2b_2c_2$ were trilaterals lying in different planes $\pi_1$ and
$\pi_2$, whose line of intersection is $l$, then for any line $g$
intersecting $l$ but lying neither in $\pi_1$ nor in $\pi_2$, we
could not find the desired $x_1$ and $x_2$, as the condition
$\bigwedge_{i=1}^2 (x_i\simeq a_i, b_i, c_i)$ forces them to lie
in $\pi_1$ and $\pi_2$, so they can neither be equal to $g$, nor
from a trilateral with it. A dual reasoning shows that, if
$a_1b_1c_1$ and $a_2b_2c_2$ were both tripods, (\ref{N}) could not
hold.
\par
%%%%%%%%%%%%%%%%%%%%%%%%%%%%%%%%%%%%%%%%%%%%%%%%%%%%%%%%%%%%%%%%%%
{\unitlength1.0cm
      \begin{center}
      %% Figur
      \begin{minipage}[t]{6.0cm}
         \begin{picture}(6.0,3.7)
         %%\put(0.0 ,0.0){\includegraphics[width=7.0cm]{./gitter.eps}}
         \put(0.0 ,0.1){\includegraphics[width=6.0cm]{./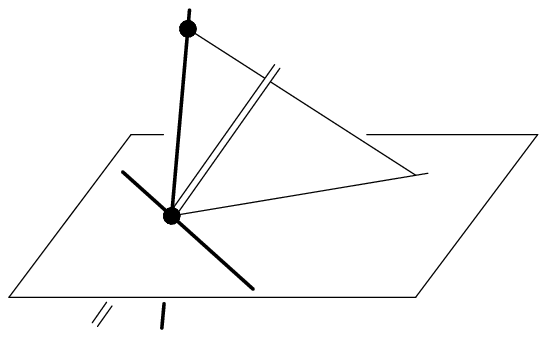}}
         \put(4.40,0.55){$\pi$}
         \put(1.50,0.95){$G$}
         \put(2.60,2.05){$g$}
         \put(1.6,3.4){$P$}
         \put(3.9 ,1.9){$\gamma$}
         \put(2.8,0.60){$x_{1}$}
         \put(1.5,2.50){$x_{2}$}
      \end{picture}
      {\refstepcounter{abbildung}\label{abb7}
       \centerline{Figure \ref{abb7}.}}
      \end{minipage}
      \hspace{1.5cm}
      %%Figur
      \begin{minipage}[t]{6.0cm}
         \begin{picture}(6.0,3.7)
         %\put(0.0 ,0.0){\includegraphics[width=7.0cm]{./gitter.eps}}
         \put(0.0 ,0.0){\includegraphics[width=6.0cm]{./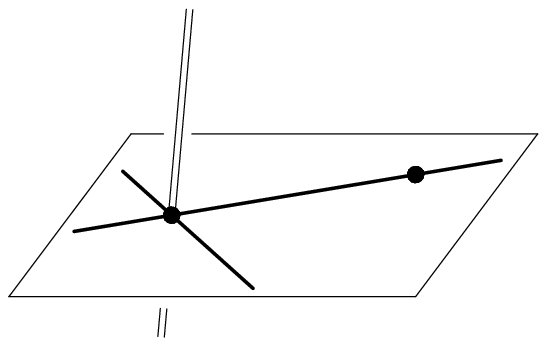}}
         \put(4.40,0.6){$\pi$}
         \put(1.50,0.95){$G$}
         \put(4.50,1.40){$P$}
         \put(1.70,3.5){$g$}
         \put(3.1,1.30){$x_{2}$}
         \put(2.8,0.65){$x_{1}$}

         \end{picture}
         {\refstepcounter{abbildung}\label{abb8}
          \centerline{Figure \ref{abb8}.}}
         \end{minipage}
      \end{center}
   %\end{figure}
}%
%%%%%%%%%%%%%%%%%%%%%%%%%%%%%%%%%%%%%%%%%%%%%%%%%%%%%%%%%%%%%%%%%%

\par
The sexternary predicate $\equiv_{\oplus}$, with $abc
\equiv_{\oplus}  a'b'c'$ standing for `$abc$ and $a'b'c'$ are both
trilaterals lying in \emph{different} planes or both tripods with
\emph{different} vertices', is defined by
\begin{eqnarray} \label{U}
a_1b_1c_1 \equiv_{\oplus}  a_2b_2c_2 & :\Leftrightarrow &
(\exists\, x_1x_2x_3)\, a_1b_1c_1 \equiv_{+}  a_2b_2c_2 \wedge
(x_3\sim
a_1,b_1,c_1,a_2, b_2,c_2)\\
& & {} \wedge a_1b_1c_1 \equiv_{-} x_1x_2x_3 \wedge
\bigg(\bigwedge_{i=1}^2 (x_i\sim a_i,b_i,c_i)\bigg).\nonumber
\end{eqnarray}
If $a_1b_1c_1$ and $a_2b_2c_2$ are both trilaterals (the tripod
case is treated dually), lying in different planes $\pi_1$ and
$\pi_2$ intersecting in $g$, then we choose a point $P$ on $g$ as
the vertex of a tripod $x_1x_2x_3$, where $x_3=g$, $x_1$ lies in
$\pi_1$, and $x_2$ lies in $\pi_2$. If $a_1b_1c_1$ and $a_2b_2c_2$
were both trilaterals lying in the same plane $\pi$, then any
$x_1,x_2,x_3$ satisfying the intersection conditions of (\ref{U})
would have to belong to $\pi$, and thus could not form a tripod.
\par
We are finally ready to define positively, with $\neq$ allowed,
the skewness predicate $\sigma$, with $\sigma(ab)$ to be read `the
lines $a$ and $b$ are skew', by
\begin{eqnarray} \label{neq}
\sigma(ab) & :\Leftrightarrow & (\forall\, g)(\exists\,
xa_1a_2b_1b_2)\,
(x\sim a,b) \wedge (x\simeq g) \\
& & \bigwedge_{i=1}^2 (aa_ix \equiv_{+}  bb_ix \wedge
aa_ix\equiv_{\oplus}  bb_ix )\wedge aa_1x \equiv_{-}
aa_2x.\nonumber
\end{eqnarray}
Suppose $a$ and $b$ are skew, and let $P$ be a point on $a$
(figure \ref{abb10}). The line $g$ must have a point $R$ in common
with the plane determined by $P$ and $b$. Let $x$ be a line
containing $P$, $R$ and intersecting $b$ in a point $Q$. Let $a_1$
be any line through $P$ that does not lie in plane determined by
$a$ and $x$, $a_2$ be any line intersecting both $x$ and $a$ in
points different from $P$, $b_1$ a line through $Q$  not in the
plane determined by $b$ and $x$, and $b_2$ a line intersecting $b$
and $x$ in points different from $Q$. With these choices the
definiens in (\ref{neq}) is satisfied.
\par
Should $a$ intersect $b$, and should $g$ be chosen such that $abg$
forms a tripod with vertex $P$, then, given that $(x\sim
a,b)\wedge (x\simeq g)$, the $x$ required to exist by (\ref{neq})
would have to pass through $P$. Since $aa_1x \equiv_{-} aa_2x$,
one of $aa_1x$ or $aa_2x$ must be a tripod. W. l.\ o.\ g.\ we may
suppose $aa_1x$ is a tripod. By $aa_1x\equiv_{+}   bb_1x$, $bb_1x$
must be a tripod as well, and by $aa_1x \equiv_{\oplus} bb_1x$ the
two tripods must have different vertices, which is impossible,
for, regardless of the choice of $a_1$ and $b_1$, the vertex of
both tripods is $P$.
%%%%%%%%%%%%%%%%%%%%%%%%%%%%%%%%%%%%%%%%%%%%%%%%%%%%%%%%%%%%%%%%%%
%% Abbildung 9 gibt es nicht mehr
%%%%%%%%%%%%%%%%%%%%%%%%%%%%%%%%%%%%%%%%%%%%%%%%%%%%%%%%%%%%%%%%%%
\par
%%%%%%%%%%%%%%%%%%%%%%%%%%%%%%%%%%%%%%%%%%%%%%%%%%%%%%%%%%%%%%%%%%
{\unitlength1.0cm
      \begin{center}
      %% Figur
      \begin{minipage}[t]{6.0cm}
         \begin{picture}(6.0,4.3)
         \put(0.0 ,0.0){\includegraphics[width=6.0cm]{./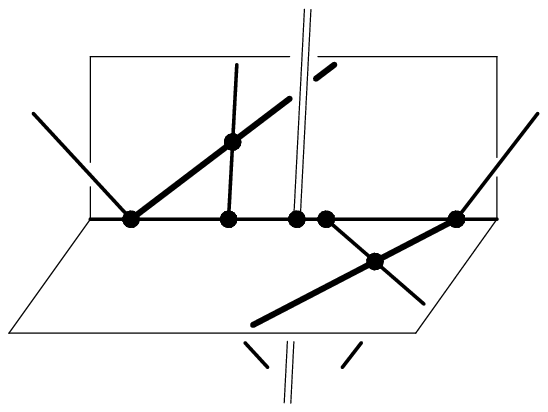}}
         \put(2.55,1.00){$a$}
         \put(3.75,3.50){$b$}
         \put(3.10,4.20){$g$}
         \put(4.00,2.20){$x$}
         \put(5.65,3.45){$a_1$}
         \put(4.25,0.95){$a_2$}
         \put(0.35,3.3){$b_1$}
         \put(2.20,3.50){$b_2$}
         \put(4.90,1.65){$P$}
         \put(1.25,1.65){$Q$}
         \put(3.10,1.65){$R$}
         \end{picture}
         {\refstepcounter{abbildung}\label{abb10}
          \centerline{Figure \ref{abb10}.}}
         \end{minipage}
      \end{center}
   %\end{figure}
}%
%%%%%%%%%%%%%%%%%%%%%%%%%%%%%%%%%%%%%%%%%%%%%%%%%%%%%%%%%%%%%%%%%%
\par
The positive definition (in terms of $\sim$ with $\neq$ allowed)
of the non-intersection predicate we were looking for in the
three-dimensional case is
\begin{equation}\label{3-neq}
a\not\sim b\;:\Leftrightarrow\; a=b \vee \sigma(ab).
\end{equation}
However, we do not know whether $\neq$, the negated line equality,
is positively definable in terms of $\sim$, and thus whether it is
possible to have a thoroughly positive definiens in (\ref{3-neq}).

\section{Affine spaces}

Notice that (\ref{1})--(\ref{4}) are valid in $n$-dimensional
affine geometry with $n\geq 3$ as well, since for any plane there
is a disjoint parallel line.
\par
Since (\ref{4}) holds, any surjective map between the sets of
lines of two affine spaces of dimension $n\geq 3$, which maps
intersecting lines into intersecting lines must be injective as
well.
\par
In affine geometry, we distinguish two cases: (A) the one in which
every line is incident with exactly two points (and then the space
can be coordinatized by $\GF(2)$), and (B) the one in which every
line is incident with at least three points.  The number of all
lines is $k:=2^{n-1}(2^n-1)$ in case (A), whereas in case (B) this
number is strictly greater than $k$. Hence we can characterize
cases (A) and (B) by
\begin{eqnarray}\label{(A)}
\alpha \;:\Leftrightarrow\; (\forall\, x_1\ldots x_{k+1})\,
(\bigvee_{1\leq i<j\leq k+1} x_i=x_j)
\end{eqnarray}
and $\neg\,\alpha$, respectively. It is worth noticing that the
negated equalities in $\neg\alpha$ can be avoided altogether,
without using (\ref{4}), and that the number of variables in
$\neg\alpha$ can be greatly reduced, by taking into account that
in case (A) there are no more than $2^n-1$ pairwise intersecting
lines, namely all the lines through a fixed point, whereas in case
(B) this number is exceeded. Therefore
\begin{eqnarray}\label{(B)}
\beta \;:\Leftrightarrow\; (\exists\, x_1\ldots x_{2^n})\,
(\bigvee_{1\leq i<j\leq 2^n} x_i \sim x_j)
\end{eqnarray}
positively characterizes case (B).
\par
Affine geometry can be axiomatized in terms of points and lines,
with point-line incidence and line-parallelism as primitive
notions, and the first such axiomatization was presented in
\cite[\S2]{len}. Affine geometry of a fixed dimension $n\geq 3$,
in which (A) holds, cannot be axiomatized inside ${\cal L}$, as it
is not possible to define the line-parallelism predicate
$\parallel$ in terms of line-intersection, given that there are
maps that preserve both $\sim$ and $\not\sim$, but which do not
preserve $\parallel$, but it can be axiomatized in terms of lines,
$\sim$, and $\parallel$. Affine geometry of a fixed dimension
$n\geq 3$, in which (B) holds, can be axiomatized inside ${\cal
L}$, by rephrasing the axiom system in \cite[\S2]{len} in terms of
lines and $\sim$ (this is possible in this case as $a\parallel b$
can be replaced by $\pi(ab)\wedge a\not\sim b$, where $\pi$ is the
coplanarity predicate defined below in (\ref{coplanar})), and by
adding suitable dimension axioms. However, regardless of whether
(A) or its negation has been added to the axiom system of
$n$-dimensional affine geometry with $n\geq 3$, it is true that
$\not\sim$ can be defined positively  in terms of $\sim$, given
that $\neq$, which occurs in (\ref{13}),  can be defined
positively by means of (\ref{4}).

\par
If every line contains exactly two points, i.~e.\ in case (A),
then it is quite easy to define positively the non-intersection
predicate by observing that, if two different lines do not
intersect, then there is more than one line that intersects the
two lines in different points, but if they do intersect there is
only one such line. Therefore the definition in this case is
\begin{equation}\label{13}
a_1\not\sim a_2 \;:\Leftrightarrow\; a_1=a_2\vee \Bigg(\alpha
\wedge (\exists\, b_1b_{2})\, b_1\neq b_2 \wedge\bigg(
\bigwedge_{i=1}^{p}a_1b_i\,\#\,a_2 b_i\bigg)\Bigg).
\end{equation}
We denote the definiens of this definition by $\gamma$. The
conjunct $\alpha$ in (\ref{13}) is not needed if we regard it
plainly as a definition of non-intersection inside the ${\cal
L}$-theory of $n$-dimensional affine spaces over $\GF(2)$, but we
shall use $\gamma$ in the general case, where we have no
information regarding the number of points incident with a line,
below, and there we do need that conjunct as well.
\par
From now on, we assume that lines are incident with more than two
points. For all dimensions $n\geq 3$ we can define the coplanarity
$\pi$ of two lines (which are allowed to coincide) by
\begin{equation}\label{coplanar}
\pi(ab)\;:\Leftrightarrow\; (\exists\, cde) S(acd)\wedge
S(bce)\wedge d\sim b \wedge d\sim e \wedge e\sim a.
\end{equation}
See figure \ref{abb11}.
\par
%%%%%%%%%%%%%%%%%%%%%%%%%%%%%%%%%%%%%%%%%%%%%%%%%%%%%%%%%%%%%%%%%%
{\unitlength1.0cm
      \begin{center}
      %% Figur
      \begin{minipage}[t]{6.0cm}
         \begin{picture}(6.0,3.2)
         \put(0.0 ,0.1){\includegraphics[width=6.0cm]{./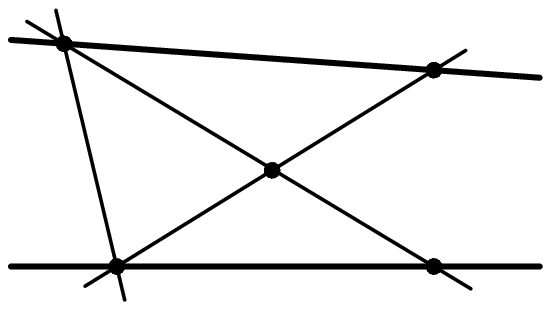}}
         \put(5.5,2.8){$a$}
         \put(5.5,0.1){$b$}
         \put(0.65,1.5){$c$}
         \put(4.0,1.9){$e$}
         \put(4.0,1.05){$d$}
      \end{picture}
      {\refstepcounter{abbildung}\label{abb11}
       \centerline{Figure \ref{abb11}.}}
      \end{minipage}
      \hspace{1.5cm}
      %%Figur
      \begin{minipage}[t]{6.0cm}
         \begin{picture}(6.0,3.2)
         %%\put(0.0 ,0.0){\includegraphics[width=7.0cm]{./gitter.eps}}
         \put(0.0 ,0.0){\includegraphics[width=6.0cm]{./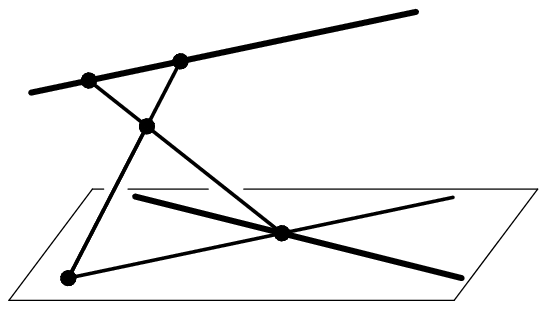}}
         \put(3.35,0.15){$V=a_1$}
         \put(4.15,2.8){$\overline{x}=a_2$}
         \put(4.45,0.75){$x\parallel a_2$}
         \put(0.25,0.08){$P$}
         \put(3.0,0.92){$X$}
         \put(0.80,2.7){$Y$}
         \put(1.10,1.7){$Z$}
         \put(1.85,2.15){$b_2$}
         \put(2.3,1.5){$b_1$}
         \end{picture}
         {\refstepcounter{abbildung}\label{abb12}
          \centerline{Figure \ref{abb12}.}}
         \end{minipage}
      \end{center}
   %\end{figure}
}%
%%%%%%%%%%%%%%%%%%%%%%%%%%%%%%%%%%%%%%%%%%%%%%%%%%%%%%%%%%%%%%%%%%
\par
To define non-intersection in $n$-dimensional affine space with
$n\geq 3$, we need the following
\begin{lem}
Let $n\geq 3$, $m=[\frac{n+1}{2}]$, let $a_1,\ldots, a_m$ be $m$
independent lines in $n$-dimensional affine space, let $U=\langle
a_1, \ldots, a_m\rangle$ be the subspace spanned by these lines,
and let $V=\langle a_1, \ldots, a_{m-1}\rangle$. Then for any
point $P\in U$ there are (not necessarily distinct) lines $b_1$
and $b_2$, such that $b_1$ joins a point in $V$ with a point on
$a_m$, $b_2$ joins a point in $V$ or in $a_m$ with a different
point on $b_1$, and $P$ lies on $b_2$.
\end{lem}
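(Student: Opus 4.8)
The plan is to argue by induction on $m$, or equivalently on $n$, by exploiting the recursive structure $U = \langle V, a_m\rangle$ with $\dim U = 2m-1$ (or $2m$, depending on parity) and $\dim V = 2m-3$ (resp.\ $2m-2$). First I would treat the base case: for $n=3$ we have $m=2$, so $V = \langle a_1\rangle = a_1$ is a single line and $U = \langle a_1, a_2\rangle$ is a plane. Given a point $P$ in that plane, I need $b_1$ joining a point of $a_1$ to a point of $a_2$, and $b_2$ joining a point of $a_1$ or $a_2$ to a second point of $b_1$, with $P$ on $b_2$. Since we are in case (B) (lines have at least three points), there is enough room: pick $b_1$ through $P$ if $P$ already lies on a transversal of $a_1$ and $a_2$; otherwise pick any transversal $b_1$ of $a_1, a_2$ not through $P$, then let $b_2$ be the line through $P$ and the point $b_1\cap a_1$ (or $b_1\cap a_2$). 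One must check this $b_2$ really meets $b_1$ in a \emph{different} point than the chosen endpoint, which is where the ``at least three points per line'' hypothesis and a short case distinction (is $P$ on $a_1$? on $a_2$? on $b_1$?) do the work. The degenerate affine phenomenon — parallelism — has to be watched: a candidate transversal of $a_1$ and $a_2$ exists because $a_1, a_2$ are independent and coplanar, hence not parallel in their common plane when $m=2$; more care is needed when $a_1$ and $a_2$ happen to be parallel, in which case $b_1$ can be taken to be a genuine crossing line and the construction still goes through.

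For the inductive step I would write $P \in U = \langle V', a_{m-1}, a_m\rangle$ where $V' = \langle a_1,\dots,a_{m-2}\rangle$, and split according to where $P$ sits relative to the smaller subspace $W = \langle a_1,\dots,a_{m-1}\rangle = \langle V', a_{m-1}\rangle$. If $P \in W$, then applying the inductive hypothesis to the $(m-1)$-tuple $a_1,\dots,a_{m-1}$ inside the lower-dimensional space $W$ produces the required $b_1, b_2$ already inside $W\subseteq U$, and $b_1$ then joins a point of $\langle a_1,\dots,a_{m-2}\rangle$ to a point of $a_{m-1}$ — so I would have to massage the statement's wording (``$b_1$ joins a point in $V$ with a point on $a_m$'') to allow $a_{m-1}$ to play the role of $a_m$, which is legitimate since the roles of the generating lines are symmetric up to relabeling. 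If $P \notin W$, the idea is to drop from $P$ onto $W$: choose a line $\ell$ through $P$ meeting $a_m$ (possible because $\langle P, a_m\rangle$ has dimension at most one more than $\dim a_m$ and $P$ together with $a_m$ spans at most a plane, so a point of $a_m$ can be joined to $P$), set the intersection point $P' = \ell \cap a_m$, then realize $P'$ — or rather a nearby point — via the inductive construction and connect. Concretely, take $b_2$ through $P$ meeting $a_m$; then $b_1$ should be a transversal from $V$ to $a_m$ passing through the point $b_2 \cap a_m$, which exists by a dimension count inside $\langle V, a_m\rangle$.

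The main obstacle I anticipate is the affine (as opposed to projective) setting: two independent lines in a plane can be parallel, and more generally a subspace need not meet another subspace in the dimension one would predict projectively, because of parallelism. So each time I invoke ``there is a transversal'' or ``$P$ can be joined to a point of $a_m$,'' I must verify that the relevant join/meet is not frustrated by parallelism, and when it is, supply an alternative point. I expect this to force a somewhat tedious but routine case analysis organized by the parallelism type of the configuration; the hypothesis that every line carries at least three points (case (B), which is in force for this lemma) is exactly what guarantees that, after excluding the finitely many ``bad'' points, a good choice of auxiliary point on each line still remains. The counting $m = [\frac{n+1}{2}]$ is chosen precisely so that $\dim U = 2m-1 \geq n$ when $n$ is odd and $\dim U = 2m-2$... — rather, so that $U$ is either all of affine $n$-space (n odd) or a hyperplane ($n$ even); this is what later lets every line $g$ be ``reached,'' but for the lemma itself only the internal geometry of $U$ matters, so I would not need the ambient dimension beyond ensuring $U$ is well-defined, i.e.\ that $m$ independent lines exist, which holds since $2m-1 \le n$ or $2m - 2 < n$ gives enough room.
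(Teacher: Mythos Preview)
Your plan does not go through, and the paper's proof is not inductive at all. Two concrete problems. First, the base case: for $n=3$, $m=2$, two \emph{independent} lines are skew, so $U=\langle a_1,a_2\rangle$ is all of affine $3$-space, not a plane; your base-case picture is drawn in the wrong dimension. Second, and more seriously, the construction you sketch for the main case does not meet the lemma's requirement. You take $b_2$ through $P$ meeting $a_m$ at a point $Q$, then $b_1$ through $Q$ into $V$; but then $b_2\cap b_1=\{Q\}$, which coincides with the anchor of $b_2$ in $a_m$, whereas the lemma demands that the point of $b_2$ on $b_1$ be \emph{different} from its point in $V\cup a_m$. (Note also that your $W=\langle a_1,\dots,a_{m-1}\rangle$ is literally $V$, so ``$P\in W$'' is just the trivial case $P\in V$, and for $P\notin V$ the inductive output---a $b_1$ from $\langle a_1,\dots,a_{m-2}\rangle$ to $a_{m-1}$---is not what the lemma asks for; relabelling changes the statement rather than proving it.)

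The idea you are missing is this: for $P\notin V\cup a_m$, the plane $\langle P,a_m\rangle$ and the subspace $\langle P,V\rangle$ intersect, by a dimension count inside $U$, in a \emph{line} $x$ through $P$. If $x$ meets both $a_m$ and $V$, take $b_1=b_2=x$. Otherwise $x$ is parallel to exactly one of them (it cannot be parallel to both); say $x$ meets $a_m$ at $X$ and $x\parallel V$. Choose any $Y\in V$, set $b_1=\langle X,Y\rangle$, pick a third point $Z$ on $b_1$ (this is where case~(B) enters), and let $b_2=\langle P,Z\rangle$. In the plane $\langle x,Y\rangle$ the parallel $\overline{x}$ to $x$ through $Y$ lies inside $V$, and $b_2$ is not parallel to $\overline{x}$ since $Z\ne X$; hence $b_2$ meets $V$ at a point $\ne Z$. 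That direct construction is the entire proof; no induction is needed or helpful.
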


\begin{proof}
If $P$ is on $a_m$ (or if $P\in V$), then choose $b_1=b_2$ to be a
line joining $P$ with a point  in $V$ (or in $a_m$). If $P$ is
neither on $a_m$ nor in $V$, then the subspaces $\langle P,
a_m\rangle$ and $\langle P, V\rangle$ intersect in a line $x$. If
$x$ intersects both $a_m$ and $V$ in a point, then we let
$b_1=b_2=x$. Since $x$ cannot be parallel to both $x$ and $V$, if
it doesn't intersect both, it may be parallel to only one of them,
i.~e.\ either (i) $x\parallel V$ or (ii) $x\parallel  a_m$. Let
$X$ be the point of intersection of $x$ with (i) $a_m$ or (ii)
$V$. Let $Y$ be a point in (i) $V$ or (ii) $a_m$, let
$\overline{x}$ be the parallel through $Y$ to $x$, and
$b_1:=\langle X, Y \rangle$. (Figure \ref{abb12} depicts case (ii)
for $m=2$, so that $V=a_1$ and $\overline{x}=a_2$.) Let $Z$ be a
third point on $b_1$ and let $b_2:=\langle P, Z\rangle$. The line
$b_2$ is not parallel to $\overline{x}$ and thus intersects (i)
$V$ or (ii) $a_m$ in a point which is different from $Z$.
\end{proof}

We now define some auxiliary predicates. Let $M(a_1\ldots a_mx)$
stand for `$x$ is one of the lines $a_i$ or it intersects two of
these lines in different points', i.~e.
\begin{equation}
M(a_1\ldots a_mx)\;:\Leftrightarrow\; \Big(\bigvee_{i=1}^m
x=a_i\Big)\vee \Big(\bigvee_{1\leq i<j\leq m} a_ix\,\#\,
a_jx\Big).
\end{equation}
Closely related to $M$, we introduce
\begin{equation}\label{Mq}
M_q(a_1\ldots a_mx)\;: \Leftrightarrow\;(\exists\, b_1\ldots
b_q)\, \bigwedge_{i=1}^q M(a_1\ldots a_mb_1\ldots b_i) \wedge
M(a_1\ldots a_m b_1\ldots b_q x).
\end{equation}
If (\ref{Mq}) holds then the line $x$ belongs to the affine
subspace spanned by $a_1,...,a_m$, since it can be `reached' with
the help of the auxiliary lines $b_1,\ldots,b_q$.

\par
With $m$ standing for  $[\frac{n+1}{2}]$,  whenever $a_1\not\sim
a_2$, we can find lines $a_3, \ldots, a_m$ such that $a_1, \ldots,
a_m$ are independent. Let $U$ be the subspace spanned by them. We
infer from the above lemma, that each line $h$ in $U$ satisfies
$M_r(a_1\ldots a_mh)$ for $r=2^{m+1}-4$. Recall that $\beta$
ensures that we are in case (B). So we can now state the
definition of non-intersection, when $n$ is even (in this case $U$
is a hyperplane, so that to any line $g$ there exists a line $h$
in $U$ coplanar with $g$) as
\begin{equation}\label{delta0}
a_1\not\sim a_2: \;\Leftrightarrow\; a_1=a_2 \vee \Big(\beta\wedge
 (\exists\,
a_3\ldots a_m)(\forall\, g)(\exists\, h)\, \pi(gh)\wedge
M_r(a_1\ldots a_mh)\Big).
\end{equation}
If $n$ is odd, $U$ is the whole affine space, so any line $g$ lies
in $U$, and thus
\begin{equation}\label{delta1}
a_1\not\sim a_2: \;\Leftrightarrow\; a_1=a_2 \vee \Big(\beta\wedge
(\exists\, a_3\ldots a_m)(\forall\, g)\, M_r(a_1\ldots a_mg)\Big).
\end{equation}
The definiens of the definitions in (\ref{delta0}) and
(\ref{delta1}) are denoted by $\delta_0$ and $\delta_1$,
respectively.
\par
Finally, we return to the general case of $n$-dimensional affine
geometry. By (\ref{13}), (\ref{delta0}), and (\ref{delta1}) the
definition of non-intersection is
\begin{equation}\label{allgemein}
a_1\not\sim a_2\;:\Leftrightarrow\; \gamma \vee
\delta_{2\left(\frac{n}{2}-[\frac{n}{2}]\right)}.
\end{equation}

\section{Higher-dimensional subspaces}

Given \cite{cho}, $n$-dimensional projective geometry can also be
axiomatized with $k$-dimensional subspaces (for all $1\leq k\leq
n-1$ with $2k+1\neq n$) as individual variables and a binary
intersection predicate $\sim$, with $a\sim b$ to be interpreted as
`the subspaces $a$ and $b$ intersect in a $k-1$-dimensional
subspace'. From the results in \cite{hua} it follows that the
non-intersection predicate is also positively definable in terms
of the intersection predicate (negated equality is allowed), but
the actual definition will very likely be prohibitively intricate.

\small%%LAYOUT ab hier alles (!) klein

\par
\noindent Hans Havlicek, Institut f\"ur Geometrie, Technische
Universit\"at Wien, Wiedner Hauptstra{\ss}e 8--10/1133, \linebreak
A--1040 Wien, Austria.
\par
\noindent Victor Pambuccian, Department of Integrative Studies,
Arizona State University West, P.~O.~Box 37100, Phoenix, AZ
85069-7100, U.S.A.

\end{document}